\documentclass[11pt]{amsart}

\usepackage[margin=3cm]{geometry}

\usepackage[foot]{amsaddr}
\usepackage{amsmath}
\usepackage{amssymb}
\usepackage{amsthm}
\usepackage{graphicx}
\usepackage{float}
\usepackage{color}
\usepackage{dsfont}
\usepackage{comment}
\usepackage[T1]{fontenc}
\usepackage[utf8]{inputenc}
\usepackage{tabularx,ragged2e,booktabs,caption}

\newtheorem{theorem}{Theorem}[section]

\newtheorem{lemma}[theorem]{Lemma}

\theoremstyle{definition}

\newtheorem{example}[theorem]{Example}


\title[Monotonicity of the  principal eigenvalue] 
      { Monotonicity of the  principal eigenvalue  for a linear time-periodic parabolic operator
      }



\author{Shuang Liu,\ \ Yuan Lou,\ \ Rui Peng\ \ and\ \, Maolin Zhou}

\thanks{{S. Liu}: Institute for Mathematical Sciences, Renmin University of China, Beijing 100872, PRC. Email: liushuangnqkg@ruc.edu.cn}

\thanks{{Y. Lou}: Department of Mathematics, Ohio State University, Columbus, OH 43210, USA. Email:  lou@math.ohio-state.edu}

\thanks{{R. Peng}: School of Mathematics and Statistics, Jiangsu Normal University,
Xuzhou,  Jiangsu 221116, PRC. Email:
pengrui\,$\b{}$\,seu@163.com}

\thanks{{M. Zhou}: Department of Mathematics, School of Science and Technology,
University of New England, Armidale, NSW 2341, Australia. Email:
zhouutokyo@gmail.com}

\subjclass[2010]{Primary 35P15; Secondary 35K87, 35B10.}
 \keywords{Time-periodic parabolic operator; principal eigenvalue; frequency; monotonicity; asymptotics.}

\begin{document}
\maketitle

\begin{abstract}
We investigate the effect of frequency on the principal eigenvalue
 of a time-periodic parabolic operator with Dirichlet, Robin or Neumann boundary conditions. The monotonicity and  asymptotic behaviors of the principal eigenvalue with respect to the frequency parameter are established. Our results
 prove a conjecture raised by Hutson, Michaikow and Pol\'{a}\v{c}ik \cite{Hutson2001}.
\end{abstract}
\section{ Introduction}\label{S1}
We consider the linear time-periodic parabolic eigenvalue problem
\begin{equation}\label{fr1}
\begin{cases}
\begin{array}{ll}
\tau\partial_{t}u-\mathrm{div}\left[A(x,t)\nabla u\right]-\nabla m(x,t)\cdot\nabla u+V(x,t)u=\lambda u, &x\in\Omega,t\in[0,1],\\
bu+(1-b)\left[A\nabla u\right]\cdot\mathbf{n}=0, &x\in\partial\Omega,t\in[0,1],\\
u(x,0)=u(x,1), &x\in\Omega,
\end{array}
\end{cases}
\end{equation}
where $\Omega$ is a bounded domain in $\mathbb{R}^{N}$ with  smooth boundary $\partial\Omega$, and $\mathbf{n}$ denotes the unit outward normal vector on $\partial\Omega$.
 The positive constant $\tau$ is referred as
 the frequency, and the constant $b\in[0,1]$. 
 Denote by $\mathcal{C}=\Omega\times(0,1)$ the periodicity cell. The matrix function $A\in C^{1+\sigma,1}(\overline{\mathcal{C}})$,
 and functions $m\in C^{1+\sigma,1}(\overline{\mathcal{C}})$ and $V\in C^{\sigma,1}(\overline{\mathcal{C}})$,
  with $\sigma\in (0, 1)$,  are assumed to be periodic in $t$ with  unit period.
  Furthermore, 
  $A$ is a symmetric and uniformly  elliptic  matrix field, i.e. there exist positive constants $\gamma_1$ and $\gamma_2$ such that $\gamma_1|\xi|^{2}\leq\xi^{\mathrm{T}}A(x,t)\xi\leq\gamma_2|\xi|^{2}$ holds for all $(x,t)\in\mathcal{C}$ and $\xi\in\mathbb{R}^{N}$.

Proposition 14.4 in \cite{Hess} guarantees the existence and uniqueness of the principle eigenvalue, denoted by $\lambda(\tau)$,
 of problem (\ref{fr1}), which is
real, simple and its corresponding eigenfunction can be chosen 
positive in $\Omega\times [0, 1]$.
Furthermore, $\lambda(\tau)<Re(\lambda)$ for any other eigenvalue $\lambda$ of  (\ref{fr1}).

 Problem (\ref{fr1}) arises in  connection with the nonlinear reaction-diffusion equation
\begin{equation}\label{fr_20}
\begin{cases}
 \begin{array}{ll}
 \smallskip
 \partial_{s}w=\nabla\cdot\left[A^*(x,s)\nabla w\right]+\nabla w\cdot \nabla m^*(x,s)+wf(x,s,w), & x\in\Omega, s>0,\\
 \smallskip
 bw+(1-b)\left[A^*(x,s)\nabla w\right]\cdot\mathbf{n}=0, & x\in\partial\Omega, t>0,\\
 w(x,s)=w(x,s+T),& x\in\Omega,
  \end{array}
  \end{cases}
 \end{equation}
which models various ecological and evolutionary processes  in spatio-temporally varying environments
\cite{CC2003, CL2008,CL2012, Ni2011, PZZ2018, PZ2017}.
 The matrix function $A^*$
 and functions $m^*$, $f$ are periodic in $s$ with  a common period $T>0$.
 It is natural to inquire how the temporal variability of the environment
 affects the population dynamics of \eqref{fr_20}.
The persistence of populations is closely associated with
the stability of the steady state $w=0$
for (\ref{fr_20}), which in turn is determined by the sign of the principal eigenvalue,
denoted by $\lambda^*(T)$, of the linear eigenvalue problem
\begin{equation*}
\begin{cases}
 \begin{array}{ll}
 \smallskip
 \partial_{s}\varphi-\nabla\cdot\left[A^*(x,s)\nabla \varphi\right]-\nabla \varphi\cdot \nabla m^*(x,s)
 -f(x,s,0) \varphi=\lambda\varphi, & x\in\Omega, s>0,\\
 \smallskip
 b\varphi+(1-b)\left[A^*(x,s)\nabla \varphi\right]\cdot\mathbf{n}=0, & x\in\partial\Omega, s>0,\\
 \varphi(x,s)=\varphi(x,s+T),& x\in\Omega.
  \end{array}
  \end{cases}
 \end{equation*}

Set
$\tau=1/T, t=s/T, A(x,t)=A^*(x,s), m(x,t)=m^*(x, s), V(x,t)=-f(x,s,0),
$ and $u(x,t)=\varphi(x,s).
$
Then $u(x,t)$ satisfies problem (\ref{fr1}),
and $\lambda(\tau)=\lambda^*(T)$. Hence,
determining the stability of $w=0$ is reduced
to understanding the sign of $\lambda(\tau)$.
The goal of this paper is to study the dependence of  $\lambda(\tau)$ on the frequency $\tau$.

Given any function $p(x,t)$, which is $1$-periodic in time,  set
$$\hat{p}(x):=\int_0^1p(x,s)\mathrm{d}s.$$

For $A(x,t)\equiv \hat{A}(x)$  and $\nabla m(x,t)\equiv \nabla\hat{m}(x)$, it was shown in \cite{Hutson2000} that $\lambda(\tau)\leq\lim\limits_{\tau\rightarrow\infty}\lambda(\tau)$ for all $\tau>0$, i.e.  $\lambda(\tau)$ attains its maximum at $\tau=\infty$.
  It is natural to ask whether $\lambda(\tau)$ is increasing in $\tau$.
The monotonicity of $ \lambda(\tau)$ seems to be open \cite{Nadin2009}, even  for  the case $\nabla m=0$ as conjectured by
  Hutson et. al \cite{Hutson2001}. We now answer this conjecture positively as follows:
\begin{theorem}\label{FR_thm 1}
Assume $\nabla m=0$. Then  $\lambda(\tau)$ 
 is non-decreasing in
$\tau>0$. If further assume  $A(x,t)\equiv \hat{A}(x)$, then the following assertions hold:
\begin{itemize}
\item[(i)]  If  $V= \hat{V}(x)+g(t)$ for some $1$-periodic function $g(t)$, then
 $\lambda(\tau)$ is constant for  $\tau>0$;
 \item[(ii)] Otherwise $\frac{\partial\lambda}{\partial\tau}(\tau)>0$ for every $\tau>0$.
\end{itemize}
\end{theorem}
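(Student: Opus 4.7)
The plan is to derive an explicit formula for $\lambda'(\tau)$ as a manifestly non-negative integral, from which both the monotonicity and the characterization of the constant case will follow. Let $u=u(\cdot,\cdot;\tau)$ denote the positive principal eigenfunction of \eqref{fr1} (with $\nabla m=0$), and let $u^{*}$ be the positive principal eigenfunction of the formal adjoint problem
\begin{equation*}
  -\tau\partial_t u^{*}-\mathrm{div}(A\nabla u^{*})+V u^{*}=\lambda(\tau)\,u^{*},
\end{equation*}
with the same boundary and periodicity conditions. A direct computation from both equations shows that $t\mapsto \int_{\Omega} uu^{*}\,dx$ is constant in $t$; normalize it to $1$. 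Differentiating \eqref{fr1} in $\tau$, pairing with $u^{*}$, and using the adjoint equation to eliminate the $\partial_{\tau}u$ contribution yields
\begin{equation*}
  \lambda'(\tau)=\int_{\mathcal{C}}(\partial_t u)\,u^{*}\,dx\,dt.
\end{equation*}

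Introduce the Hopf-Cole variables $\mu=\tfrac{1}{2}\ln(uu^{*})$ and $\nu=\tfrac{1}{2}\ln(u/u^{*})$, so that $\ln u=\mu+\nu$ and $\ln u^{*}=\mu-\nu$. The first-order nonlinear equations satisfied by $\ln u$ and $\ln u^{*}$, when added and subtracted respectively, yield the coupled Riccati-type system
\begin{align*}
  \tau\partial_t\nu &= \mathrm{div}(A\nabla\mu)+\nabla\mu\cdot A\nabla\mu+\nabla\nu\cdot A\nabla\nu-V+\lambda,\\
  \tau\partial_t\mu &= \mathrm{div}(A\nabla\nu)+2\,\nabla\mu\cdot A\nabla\nu.
\end{align*}
Writing $(\partial_t u)u^{*}=e^{2\mu}(\partial_t\mu+\partial_t\nu)$ and using $\int_{\mathcal{C}}\partial_t e^{2\mu}\,dx\,dt=0$ by periodicity, the derivative formula reduces to $\lambda'(\tau)=\int_{\mathcal{C}}e^{2\mu}\partial_t\nu\,dx\,dt$.

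The decisive step is to multiply the second equation of the system by $2\nu e^{2\mu}$ and integrate over $\mathcal{C}$. Integration by parts in $x$ on the divergence term produces $2\int_{\mathcal{C}} e^{2\mu}\,\nabla\nu\cdot A\nabla\nu$ plus a term that exactly cancels the $4\int_{\mathcal{C}}\nu e^{2\mu}\,\nabla\mu\cdot A\nabla\nu$ already present; the boundary contributions vanish because $A\nabla\nu\cdot\mathbf{n}=0$ in the Neumann/Robin cases, and because the weight $e^{2\mu}=uu^{*}$ vanishes quadratically at $\partial\Omega$ in the Dirichlet case (by Hopf's lemma, while $\nu$ and $\nabla\nu$ remain bounded there). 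Integration by parts in $t$, using periodicity, then converts $\int_{\mathcal{C}}\nu e^{2\mu}\partial_t\mu$ into $-\tfrac{1}{2}\int_{\mathcal{C}}e^{2\mu}\partial_t\nu=-\tfrac{1}{2}\lambda'(\tau)$. Assembling these ingredients gives the key identity
\begin{equation*}
  \lambda'(\tau)=\frac{2}{\tau}\int_{\mathcal{C}} uu^{*}\,\nabla\nu\cdot A\nabla\nu\,dx\,dt\ge 0,
\end{equation*}
which proves the non-decreasing property.

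Now assume additionally $A\equiv \hat A(x)$. For (i), set $\overline{g}=\int_{0}^{1}g(s)\,ds$; direct substitution verifies that $u(x,t)=\phi(x)\exp\bigl(\tau^{-1}\int_{0}^{t}(\overline{g}-g(s))\,ds\bigr)$, with $\phi>0$ the principal eigenfunction of $-\mathrm{div}(\hat A\nabla\phi)+\hat V\phi=\lambda_{0}\phi$, solves \eqref{fr1} with $\lambda(\tau)=\lambda_{0}+\overline{g}$, independent of $\tau$. For (ii), if $\lambda'(\tau_{0})=0$ for some $\tau_{0}>0$, uniform ellipticity of $A$ and positivity of $uu^{*}$ force $\nabla\nu\equiv 0$, so $u/u^{*}=h(t)$; plugging $u=h(t)u^{*}$ into \eqref{fr1} and invoking the adjoint equation yields $h'/h=-2\partial_t\ln u^{*}$, which integrates to $u^{*}(x,t)=\phi^{*}(x)/\sqrt{h(t)}$, and hence $u=\phi^{*}(x)\sqrt{h(t)}$ separates. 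Substituting this expression for $u^{*}$ back into the adjoint equation (now crucially using $A=\hat A(x)$) produces
\begin{equation*}
  V(x,t)=\lambda-\tau\,\frac{h'(t)}{2h(t)}+\frac{\mathrm{div}(\hat A\nabla\phi^{*})}{\phi^{*}}(x),
\end{equation*}
which is of the required form $\hat V(x)+g(t)$. The principal technical obstacle is rigorously justifying the two integrations by parts in the Dirichlet case, where $\mu$ and $\nu$ are singular at $\partial\Omega$; this is handled through a boundary-layer approximation based on the linear vanishing of $u$ and $u^{*}$ at $\partial\Omega$ (Hopf's lemma), which makes the weight $uu^{*}$ dominate the formal singularities.
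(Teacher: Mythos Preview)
Your proof is correct and arrives at precisely the same key identity as the paper, namely
\[
\frac{\partial\lambda}{\partial\tau}=\frac{1}{2\tau}\int_{\mathcal{C}}u\,u^{*}\left[\nabla\log\!\left(\frac{u^{*}}{u}\right)\right]\cdot\left[A\nabla\log\!\left(\frac{u^{*}}{u}\right)\right],
\]
but the route you take is genuinely different. The paper introduces an auxiliary functional $J_\tau(\zeta)=\int_{\mathcal{C}}u_\tau v_\tau\,(L_\tau\zeta/\zeta)$ and proves a structural lemma (Lemma~\ref{Flem2.1}) showing that $J_\tau(u_\tau)-J_\tau(\zeta)$ equals a manifestly non-negative quadratic form in $\nabla\log(\zeta/u_\tau)$; the derivative formula then follows by recognizing $\lambda'(\tau)=\tfrac{1}{2\tau}[J_\tau(u_\tau)-J_\tau(v_\tau)]$. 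Your argument instead passes to the Hopf--Cole variables $\mu=\tfrac{1}{2}\ln(uu^{*})$, $\nu=\tfrac{1}{2}\ln(u/u^{*})$, derives the coupled Riccati system directly, and extracts the identity by multiplying the $\mu$-equation by $2\nu e^{2\mu}$ and integrating by parts. Your approach is arguably more transparent computationally---no auxiliary functional, no Fr\'echet derivative of $J_\tau$---while the paper's functional framework is more conceptual (it exhibits $u_\tau$ as a critical point of $J_\tau$ and the identity as a second-order Taylor remainder) and may port more readily to other eigenvalue problems. Your treatment of parts~(i) and~(ii) is essentially identical to the paper's; your handling of the Dirichlet boundary case via the quadratic vanishing of $uu^{*}$ and boundedness of $\nu,\nabla\nu$ (from Hopf's lemma) matches the paper's Case~2 reasoning, though the paper spells out the limiting argument slightly more explicitly.
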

The monotonicity of $\lambda(\tau)$ in  Theorem \ref{FR_thm 1}
may fail  if $\nabla m\neq0$; See Example \ref{Ex1} in Sect. \ref{S3}.
Our next result concerns the case $A=I_{N\times N}$ and $\nabla m(x,t)=\nabla\hat{m}(x)$.

\begin{theorem}\label{FR_thm 1.1}
Assume $A(x,t)=DI_{N\times N}$ for some constant $D>0$ and $\nabla m(x,t)=\nabla\hat{m}(x)$. Then $\lambda(\tau)$ is non-decreasing in
 $\tau>0$ and  the following assertions hold:
\begin{itemize}
\item[(i)]  If  $V= \hat{V}(x)+g(t)$ for some $1$-periodic function $g(t)$, then
 $\lambda(\tau)$ is constant for $\tau>0$;
 \item[(ii)] Otherwise $\frac{\partial\lambda}{\partial\tau}(\tau)>0$ for every  $\tau>0$.
\end{itemize}
\end{theorem}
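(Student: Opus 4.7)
My plan is to reduce Theorem~\ref{FR_thm 1.1} to the setting of Theorem~\ref{FR_thm 1} by a gauge transformation that removes the drift. Since $A\equiv D I_{N\times N}$ is a constant multiple of the identity and $\nabla\hat m(x)$ is the gradient of a time-independent function, the natural Liouville-type substitution
$$u(x,t)=e^{-\hat m(x)/(2D)}\,v(x,t)$$
is the right choice. Expanding $\Delta u$ and $\nabla\hat m\cdot\nabla u$ in terms of $v$ shows that the first-order terms cancel identically, and problem~\eqref{fr1} transforms into
$$\tau\partial_t v-D\Delta v+\tilde V(x,t)\,v=\lambda v,\qquad v(\cdot,0)=v(\cdot,1),$$
with modified potential
$$\tilde V(x,t):=V(x,t)+\frac{|\nabla\hat m(x)|^{2}}{4D}+\frac{\Delta\hat m(x)}{2}$$
and modified boundary condition
$$\Big[b-\tfrac{1-b}{2}\,\nabla\hat m\cdot\mathbf{n}\Big]v+(1-b)\,D\,\nabla v\cdot\mathbf{n}=0\quad\text{on }\partial\Omega.$$
The spatial elliptic operator $-D\Delta+\tilde V(\cdot,t)$ remains self-adjoint on $L^2(\Omega)$ under this symmetric Robin condition, and the principal eigenvalue $\lambda(\tau)$ is unchanged by the substitution.

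Having set up this reformulation, I would invoke (the proof of) Theorem~\ref{FR_thm 1} on the transformed problem. The hypotheses ``$\nabla m\equiv0$'' and ``$A(x,t)\equiv\hat A(x)$'' (in fact $A\equiv DI$) now hold by construction, so the non-decreasing monotonicity of $\lambda(\tau)$ follows at once. For the dichotomy, Theorem~\ref{FR_thm 1}(i)--(ii) identifies constancy of $\lambda(\tau)$ with separability of the potential as a function of $x$ alone plus a function of $t$ alone. Applied to $\tilde V$, this reads $\tilde V=\hat{\tilde V}(x)+g(t)$. Because the added correction $|\nabla\hat m|^{2}/(4D)+\Delta\hat m/2$ is purely $x$-dependent, this is equivalent to $V=\hat V(x)+g(t)$ with the \emph{same} function $g$, yielding parts (i) and (ii) of Theorem~\ref{FR_thm 1.1}.

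The main obstacle is to verify that every step in the proof of Theorem~\ref{FR_thm 1} genuinely survives the mild change in boundary condition produced by the gauge transformation, namely the appearance of the $x$-dependent Robin coefficient $b-(1-b)\,\nabla\hat m\cdot\mathbf{n}/2$ in place of the constant $b\in[0,1]$. Because the arguments in Theorem~\ref{FR_thm 1} rest on the self-adjointness of the spatial part, on Krein--Rutman machinery for the principal eigenpair, and on integration-by-parts identities that use the boundary condition as a single vanishing quantity rather than the individual value of $b$, this extension is expected to be routine; nevertheless, carefully tracking the boundary data at every stage of the strict-monotonicity argument is the one detail that must be verified.
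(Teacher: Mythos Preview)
Your proposal is correct and matches the paper's approach essentially verbatim: the paper also applies the Liouville substitution $\varphi=e^{\hat m/(2D)}u$ to obtain a self-adjoint operator $\tau\partial_t-D\Delta+h(x,t)$ with $h=\frac{\Delta\hat m}{2}+\frac{|\nabla\hat m|^2}{4D}+V$ and the Robin condition $\big(b-\tfrac{1-b}{2}\nabla\hat m\cdot\mathbf n\big)\varphi+(1-b)D\nabla\varphi\cdot\mathbf n=0$, then rebuilds the functional $\widetilde J_\tau$ and the spaces $\widetilde{\mathbb S}_b,\widetilde{\mathbb S}^0_b$ with this boundary datum and reruns the Lemma~\ref{Flem2.1} computation. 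The only point the paper handles slightly more explicitly than you do is that it re-declares the admissible sets with the new ($x$-dependent) Robin coefficient rather than citing Theorem~\ref{FR_thm 1} as a black box; since the key identity in Lemma~\ref{Flem2.1} uses the boundary condition only through the fact that $[A\nabla\log\zeta]\cdot\mathbf n$ restricted to $\partial\Omega$ is the same fixed function for every admissible $\zeta$, this goes through unchanged, exactly as you anticipated.
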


 Interpreting them in the context of  spatio-temporal variation of environmental resources,
 Theorems   \ref{FR_thm 1}  and \ref{FR_thm 1.1}  suggest that if the species disperse by random diffusion
  and/or by advection along  
  some gradient $\nabla m$,
 increasing the temporal variation of the resources tends to favor the persistence of populations.  The condition $\nabla m(x,t)=\nabla\hat{m}(x)$ in Theorem \ref{FR_thm 1.1} is necessary;
  See 
  Example \ref{Ex2} in Sect. \ref{S3}.

We next state the asymptotic behaviors of $\lambda(\tau)$
 for sufficiently small or large $\tau$:
\begin{theorem}\label{FR_thm 2}
The following
assertions hold:
\begin{itemize}
\item[(i)]  For each fixed $t\in [0, 1]$, denote by $\lambda^0(t)$ the principal eigenvalue of the linear problem
\begin{equation}\label{fr2}
\begin{cases}
\begin{array}{ll}
-\mathrm{div}\left[A(x,t)\nabla \varphi\right]-\nabla m(x,t)\cdot\nabla\varphi+V(x,t)\varphi=\lambda(t)\varphi, &x\in\Omega,\\
b\varphi+(1-b)\left[A\nabla \varphi\right]\cdot\mathbf{n}=0,&x\in\partial\Omega,
\end{array}
\end{cases}
\end{equation}
then
$\lim_{\tau\rightarrow0}\lambda(\tau)=\int_{0}^{1}\lambda^0(s)\mathrm{d}s;
$
 \item[(ii)]
 Denote by $\lambda^{\infty}$ the principal eigenvalue of the linear problem
 \begin{equation}\label{fr3}
\begin{cases}
\begin{array}{ll}
-\mathrm{div}\left[\hat{A}(x)\nabla \varphi\right]-\nabla \hat{m}(x)\cdot\nabla\varphi+\hat{V}(x)\varphi=\lambda\varphi, &x\in\Omega,\\
b\varphi+(1-b)\left[\hat{A}\nabla \varphi\right] \cdot\mathbf{n}=0,&x\in\partial\Omega,
\end{array}
\end{cases}
\end{equation}
then
 $\lim_{\tau\rightarrow\infty}\lambda(\tau)=\lambda^{\infty}.
 $
\end{itemize}
\end{theorem}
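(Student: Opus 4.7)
\emph{Plan.} Write $L(t)w := -\mathrm{div}(A(x,t)\nabla w) - \nabla m(x,t)\cdot\nabla w + V(x,t)w$, so that (\ref{fr1}) reads $\tau\partial_t u + L(t)u = \lambda u$. The two parts demand different tools: (i) will be handled by an explicit periodic super/sub-solution, while (ii) combines a test against the adjoint of the averaged problem with a uniform-in-$\tau$ parabolic estimate.

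\emph{Part (i).} For each $t$ the simple eigenpair $(\lambda^0(t),\varphi^0(\cdot,t))$ of (\ref{fr2}) depends $C^1$-smoothly on $t$, with $\varphi^0>0$. Setting $\bar\lambda := \int_0^1 \lambda^0(s)\,ds$, I would take the test function
\[
 u_\tau(x,t) := \varphi^0(x,t)\exp\!\Big(\tfrac{1}{\tau}\int_0^t[\bar\lambda - \lambda^0(s)]\,ds\Big),
\]
which is positive, $1$-periodic in $t$, and satisfies the boundary condition in (\ref{fr1}). A direct computation using $L(t)\varphi^0 = \lambda^0(t)\varphi^0$ gives
\[
 \tau\partial_t u_\tau + L(t)u_\tau = \bigl(\bar\lambda + \tau\,\partial_t\varphi^0/\varphi^0\bigr)u_\tau.
\]
The ratio $\partial_t\varphi^0/\varphi^0$ is uniformly bounded on $\overline{\mathcal{C}}$, even when $b=1$, because Hopf's boundary lemma forces both numerator and denominator to vanish linearly in $\mathrm{dist}(x,\partial\Omega)$. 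The sub/super-solution characterization of the principal eigenvalue (see \cite{Hess}) then yields $|\lambda(\tau)-\bar\lambda|\leq C\tau$, proving (i).

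\emph{Part (ii).} Let $\psi^\infty>0$ denote the principal eigenfunction of the formal adjoint of (\ref{fr3}); it shares the eigenvalue $\lambda^\infty$. Multiplying the eigenvalue equation by $\psi^\infty$ and integrating over $\mathcal{C}$, the $\tau\partial_t$ term drops by periodicity of $\phi_\tau$ together with time-independence of $\psi^\infty$, and spatial integration by parts leaves
\[
 (\lambda(\tau)-\lambda^\infty)\int_{\mathcal{C}}\phi_\tau\psi^\infty\,dxdt = \int_{\mathcal{C}}\phi_\tau\,(L(t)^*-\hat L^*)\psi^\infty\,dxdt + \mathcal{B}(\tau),
\]
where $\mathcal{B}(\tau)$ collects a boundary remainder appearing when $b<1$, because the adjoint boundary condition of $L(t)$ is itself $t$-dependent through $A$ and $m$ while $\psi^\infty$ only satisfies the averaged one. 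The decisive structural feature is that both the bulk kernel $(L(t)^*-\hat L^*)\psi^\infty$ and the boundary kernel in $\mathcal{B}(\tau)$ have vanishing time-averages. Normalizing $\|\phi_\tau\|_{L^\infty}=1$ and invoking uniform parabolic Schauder estimates, the equation $\partial_t\phi_\tau = \tau^{-1}(\lambda(\tau)\phi_\tau - L(t)\phi_\tau)$ would deliver $\|\phi_\tau - \bar\phi_\tau\|_{L^2(\mathcal{C})}=O(1/\tau)$, where $\bar\phi_\tau(x):=\int_0^1\phi_\tau(x,t)\,dt$; combined with the zero-time-average structure, both right-hand terms are $O(1/\tau)$. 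A standard compactness argument ($\bar\phi_\tau$ solves the averaged elliptic problem up to an $o(1)$ source and hence converges along subsequences to a positive multiple of $\varphi^\infty$) gives $\int_{\mathcal{C}}\phi_\tau\psi^\infty\geq c_0>0$ uniformly in $\tau$, so $\lambda(\tau)\to\lambda^\infty$.

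\emph{Main obstacle.} I expect the hardest step to be the $\tau$-uniform bound $\|\phi_\tau-\bar\phi_\tau\|=O(1/\tau)$, which demands $\tau$-independent H\"older/Schauder estimates for the time-periodic eigenfunction. The Robin/Neumann cases additionally require unpacking $\mathcal{B}(\tau)$ carefully and verifying that the $t$-dependence of the adjoint boundary condition averages to zero --- this is what makes the proof close in the non-Dirichlet setting.
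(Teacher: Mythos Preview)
Your Part (i) is exactly the paper's proof: the same test function $\rho(t)\varphi^0(x,t)$ with $\rho(t)=\exp\bigl(\tfrac{1}{\tau}\int_0^t[\bar\lambda-\lambda^0(s)]\,ds\bigr)$, the same computation, and the same Hopf argument to control $\partial_t\varphi^0/\varphi^0$ at the boundary in the Dirichlet case. You even get the rate $|\lambda(\tau)-\bar\lambda|=O(\tau)$, which the paper's argument delivers but does not state.

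For Part (ii) the paper gives no proof of its own; it simply cites Nadin's Lemma 3.10 and says the space--time periodic argument adapts. Your sketch therefore goes further than the paper, and the route you take---pair with the adjoint eigenfunction $\psi^\infty$ of the averaged problem, exploit that both the bulk term $(L(t)^*-\hat L^*)\psi^\infty$ and the Robin boundary remainder have zero time-average, and close via $\|\phi_\tau-\bar\phi_\tau\|=O(1/\tau)$---is sound in spirit and close to how such limits are handled in the literature. The obstacle you flag is the real one: the $O(1/\tau)$ oscillation bound hinges on $\tau$-uniform spatial $C^{2}$ control of $\phi_\tau$, which one obtains by rescaling time to turn (\ref{fr1}) into a standard parabolic equation and invoking interior-in-time Schauder estimates together with periodicity; once that is secured, $\partial_t\phi_\tau=\tau^{-1}(\lambda(\tau)-L(t))\phi_\tau$ gives the bound directly. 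Your treatment of the boundary term $\mathcal{B}(\tau)$ is also correct: the mismatch between the $t$-dependent Robin condition for $\phi_\tau$ and the averaged one for $\psi^\infty$ produces a kernel proportional to $(A-\hat A)\nabla\psi^\infty\cdot\mathbf{n}$, which indeed has vanishing time-average. So nothing is wrong here; you have simply done more work than the paper, and correctly identified where that work lies.
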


 Part (i) in  Theorem \ref{FR_thm 2}  appears to be new,
  while part (ii) is due to Nadin \cite{Nadin2009} for the space-time periodic case. Theorems \ref{FR_thm 1}, \ref{FR_thm 1.1} and  \ref{FR_thm 2} 
 imply that if $\nabla m(x,t)=\nabla\hat{m}(x)$,
$\int_{0}^{1}\lambda^0(s)\mathrm{d}s\leq \lambda(\tau)\leq \lambda^\infty$
for any $\tau$, 
and the equality holds
 if and only if  $V$ has the form of $\hat{V}(x)+g(t)$. Here the estimate $\lambda(\tau)\leq\lambda^{\infty}$
  agrees with the result in \cite{Hutson2000}, while 
  $\lambda(\tau)\geq\int_{0}^{1}\lambda^0(s)\mathrm{d}s$ is new, 
    even in the simplest scenario when $A=I_{N\times N}$ and $\nabla m=0$;
    See \cite{Hutson2001} for  numerical results for this case.

The rest of the paper is organized as follows: In Sect. \ref{S2},
   Theorems   \ref{FR_thm 1} and    \ref{FR_thm 1.1}
  are proved.
Two examples are presented in Sect. \ref{S3} to complement Theorems  \ref{FR_thm 1} and    \ref{FR_thm 1.1}.
Sect. \ref{S4} is devoted to the proof of  Theorem   \ref{FR_thm 2}.

\section{ Monotonicity of $\lambda(\tau)$}\label{S2}
This section is devoted to the proofs of  Theorems \ref{FR_thm 1} and   \ref{FR_thm 1.1}. The proofs are based upon some functional, which was first introduced in \cite{LL2018} for an  elliptic eigenvalue problem.

\subsection{The case $\nabla m=0$}\label{S3.1}
In this subsection, we consider the scenario when $\nabla m=0$ and prove Theorem \ref{FR_thm 1}. In this case, problem (\ref{fr1}) becomes
\begin{equation}\label{fr21}
\begin{cases}
\begin{array}{ll}
\smallskip
L_\tau u:=\tau\partial_{t}u-\mathrm{div}\left[A(x,t)\nabla u\right]+V(x,t)u=\lambda(\tau)u, &x\in\Omega,t\in[0,1],\\
\smallskip
bu+(1-b)\left[A\nabla u\right]\cdot\mathbf{n}=0, &x\in\partial\Omega,t\in[0,1],\\
u(x,0)=u(x,1), &x\in\Omega,
\end{array}
\end{cases}
\end{equation}
where we denote $u_\tau$ as a positive eigenfunction associated with  $\lambda(\tau)$. Furthermore,  consider the adjoint problem of (\ref{fr21}), i.e. 
\begin{equation}\label{fr112}
\begin{cases}
\begin{array}{ll}
\smallskip
L^{*}_\tau v:=-\tau\partial_{t}v-\mathrm{div}\left[A(x,t)\nabla v\right]+V(x,t)v=\lambda(\tau)v, &x\in\Omega,t\in[0,1],\\
\smallskip
bv+(1-b)\left[A\nabla v\right]\cdot\mathbf{n}=0, &x\in\partial\Omega,t\in[0,1],\\
v(x,0)=v(x,1), &x\in\Omega.
\end{array}
\end{cases}
\end{equation}

Let $v_\tau$ be  a positive eigenfunction of \eqref{fr112} corresponding to $\lambda(\tau)$. We normalize $u_\tau$ and $v_\tau$ such that $\int_{\mathcal{C}}u^2_\tau=\int_{\mathcal{C}}u_\tau v_\tau=1$ for any $\tau>0$.

 For  $b\in[0,1]$,  define set $\mathbb{S}_b$ by
 \begin{equation*}
\mathbb{S}_b=\left\{
\begin{array}{ll}
\medskip
\zeta \in  C^{2,1}(\mathcal{C})\cap C^{1,1}(\overline{\mathcal{C}}):  &\zeta(x,0)=\zeta(x,1)~\mathrm{in}~\Omega,\\ &b\zeta+(1-b)[A\nabla \zeta]\cdot\mathbf{n}=0 ~\mathrm{on} ~\partial\Omega \times[0,1]
\end{array}\right\}.
 \end{equation*}

Define functional $J_\tau$ by
 \begin{equation}\label{fr7}
   J_\tau(\zeta)=\int_{\mathcal{C}}u_\tau v_\tau\left(\frac{ L_{\tau}\zeta}{\zeta}\right)\mathrm{d}x\mathrm{d}t, \quad\zeta\in\mathbb{S}^{0}_b,
 \end{equation}
where
\begin{equation*}
\mathbb{S}^{0}_b=
 \begin{cases}
 \begin{split}
 &\{\zeta \in \mathbb{S}_b:  \zeta>0 ~\mathrm{in}~\overline{\mathcal{C}}\}, &0\leq b<1,\\
 &\{\zeta \in \mathbb{S}_1:  \zeta>0 ~\mathrm{in}~\mathcal{C}, \nabla\zeta\cdot\mathbf{n}<0~\mathrm{on} ~\partial\Omega \times[0,1]\},&b=1.
  \end{split}
 \end{cases}
 \end{equation*}
 Clearly, for any $\tau>0$, $u_\tau, v_\tau\in \mathbb{S}^{0}_b$ and $ J_\tau$ is well defined on the cone $\mathbb{S}^{0}_b$. The following property of  $J_\tau$ turns out to be  crucial in establishing Theorem \ref{FR_thm 1}.
\begin{lemma}\label{Flem2.1}
For any  $\zeta\in\mathbb{S}^{0}_b$, we have
\begin{equation}\label{fr8}
   J_\tau(u_\tau)-J_\tau(\zeta)=\int_{\mathcal{C}}u_\tau v_\tau \left[\nabla\log\left(\frac{\zeta}{u_\tau}\right)\right]\cdot\left[ A\nabla\log\left(\frac{\zeta}{u_\tau}\right)\right].
\end{equation}
\end{lemma}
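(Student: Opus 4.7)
\medskip

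\noindent\textbf{Proof proposal.} The plan is to substitute $\zeta=\psi u_\tau$ with $\psi:=\zeta/u_\tau$ and reduce the identity to a calculation in the auxiliary function $\psi$. A direct expansion, using the symmetry of $A$ and the product rule, gives
\begin{equation*}
L_\tau\zeta \;=\; \psi\,L_\tau u_\tau \;+\; \tau u_\tau\,\partial_t\psi \;-\; 2A\nabla\psi\cdot\nabla u_\tau \;-\; u_\tau\,\mathrm{div}(A\nabla\psi),
\end{equation*}
so, after dividing by $\zeta=\psi u_\tau$ and using $L_\tau u_\tau=\lambda(\tau)u_\tau$ together with the normalization $\int_{\mathcal{C}}u_\tau v_\tau=1$, I obtain
\begin{equation*}
J_\tau(u_\tau)-J_\tau(\zeta)\;=\;\int_{\mathcal{C}}u_\tau v_\tau\Bigl[-\tau\,\tfrac{\partial_t\psi}{\psi}+2\,\tfrac{A\nabla\psi\cdot\nabla u_\tau}{\psi\,u_\tau}+\tfrac{\mathrm{div}(A\nabla\psi)}{\psi}\Bigr].
\end{equation*}

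Next, I integrate by parts on the divergence term against the weight $u_\tau v_\tau/\psi$. Writing out $\nabla(u_\tau v_\tau/\psi)$ produces exactly the target quantity $\int_{\mathcal{C}}u_\tau v_\tau\,A\nabla\log\psi\cdot\nabla\log\psi$ plus two cross terms involving $A\nabla\psi\cdot\nabla u_\tau$ and $A\nabla\psi\cdot\nabla v_\tau$. Collecting with the $2A\nabla\psi\cdot\nabla u_\tau/(\psi u_\tau)$ term already present, the residue to control is
\begin{equation*}
R\;:=\;-\tau\!\int_{\mathcal{C}}\!u_\tau v_\tau\,\tfrac{\partial_t\psi}{\psi}+\int_{\mathcal{C}}\!\tfrac{v_\tau}{\psi}A\nabla\psi\cdot\nabla u_\tau-\int_{\mathcal{C}}\!\tfrac{u_\tau}{\psi}A\nabla\psi\cdot\nabla v_\tau,
\end{equation*}
and the goal is to show $R=0$.

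To handle $R$, I integrate the time term by parts using the $1$-periodicity in $t$, producing $\tau\int(\log\psi)\,\partial_t(u_\tau v_\tau)$. The equations \eqref{fr21} and \eqref{fr112} yield
\begin{equation*}
\tau\,\partial_t(u_\tau v_\tau)\;=\;v_\tau\,\mathrm{div}(A\nabla u_\tau)\;-\;u_\tau\,\mathrm{div}(A\nabla v_\tau),
\end{equation*}
the $\lambda$-terms and the $V$-terms canceling exactly. One more integration by parts in $x$, expanding $\nabla[(\log\psi)v_\tau]$ and $\nabla[(\log\psi)u_\tau]$, makes the $(\log\psi)A\nabla u_\tau\cdot\nabla v_\tau$ pieces cancel against each other and reproduces precisely the negatives of the two spatial cross terms in $R$, so $R=0$ as desired.

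The main obstacle is bookkeeping of the boundary terms in the three regimes of $b$. For Robin ($0<b<1$), both $\zeta$ and $u_\tau$ satisfy $b\cdot+(1-b)A\nabla\cdot\mathbf{n}=0$, which forces $A\nabla\psi\cdot\mathbf{n}=0$ on $\partial\Omega\times[0,1]$, killing the boundary integral from the first integration by parts; symmetric cancellation handles the boundary terms coming from $\int(\log\psi)v_\tau\,\mathrm{div}(A\nabla u_\tau)-\int(\log\psi)u_\tau\,\mathrm{div}(A\nabla v_\tau)$. The Neumann case $b=0$ is immediate. For Dirichlet ($b=1$), I use the Hopf lemma together with the conditions built into $\mathbb{S}_1^0$ to see that $\psi$ and $1/\psi$ extend continuously and positively up to $\partial\Omega$, while $u_\tau v_\tau$ vanishes quadratically near $\partial\Omega$, so all boundary contributions vanish. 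Once these are verified in each case, assembling the identities above yields \eqref{fr8}.
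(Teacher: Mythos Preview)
Your argument is correct and takes a genuinely different organizational route from the paper's. The paper proceeds variationally: it first expands $J_\tau(\zeta)$ in terms of $\log\zeta$, then computes the Fr\'echet derivative $\mathbf{D}J_\tau$ and proves that $u_\tau$ is a critical point, i.e.\ $\mathbf{D}J_\tau(u_\tau)\varphi=0$ for all $\varphi\in\mathbb{S}_b$; finally it recognizes $J_\tau(u_\tau)-J_\tau(\zeta)$ as $-\mathbf{D}J_\tau(u_\tau)\bigl[u_\tau\log(\zeta/u_\tau)\bigr]$ plus the target integral. You bypass the derivative entirely: the substitution $\zeta=\psi u_\tau$ and the identity $\tau\partial_t(u_\tau v_\tau)=v_\tau\,\mathrm{div}(A\nabla u_\tau)-u_\tau\,\mathrm{div}(A\nabla v_\tau)$ reduce the residue $R$ to zero by two integrations by parts. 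The paper's approach has the conceptual payoff of exhibiting $u_\tau$ as a critical point of $J_\tau$ (structural information that connects to the functional viewpoint of \cite{LL2018}); your approach is computationally leaner and makes the mechanism---cancellation driven by the adjoint pair $(u_\tau,v_\tau)$---more transparent. The boundary analysis is essentially the same in both: for $0\le b<1$ the Robin condition forces $A\nabla\psi\cdot\mathbf{n}=0$ (the paper phrases this as the boundary integral being independent of $\zeta$), and for $b=1$ both arguments rely on the Hopf lemma and the quadratic vanishing of $u_\tau v_\tau$ at $\partial\Omega$.
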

\begin{proof}
By the definition of $J_\tau$, we observe that, for every $\zeta\in\mathbb{S}^{0}_b$,
\begin{equation}\label{fr9}
\begin{split}
J_\tau(\zeta)=&\tau\int_{\mathcal{C}}  u_\tau  v_\tau\left(\frac{\zeta_t}{\zeta}\right)-\int_{\mathcal{C}} u_\tau  v_\tau \left[\frac{\mathrm{div}(A\nabla \zeta)}{\zeta}\right]+\int_{\mathcal{C}} u_\tau  v_\tau V\\
=&\tau\int_{\mathcal{C}}  u_\tau  v_\tau\partial_t \log \zeta-\int_{0}^{1}\!\!\!\int_{\partial\Omega} u_\tau  v_\tau \left[A\nabla\log\zeta\right]\cdot\mathbf{n}+\int_{\mathcal{C}}\nabla\left( u_\tau  v_\tau\right)\cdot \left[A\nabla\log\zeta\right]
\\&-\int_{\mathcal{C}} u_\tau  v_\tau\Big[(\nabla\log\zeta)\cdot\left( A\nabla\log\zeta\right)\Big]+\int_{\mathcal{C}} u_\tau  v_\tau V.\\
\end{split}
\end{equation}
In what follows, we need to distinguish two cases: $0\leq b<1$ and $b=1$.\\

\noindent {\bf Case 1:  $0\leq b<1$.}
First, we claim that $u_\tau$ is  a critical point of $J_\tau$ in the sense that
\begin{equation}\label{fr_DJ}
\begin{array}{l}
\mathbf{D}J_\tau(u_\tau)\varphi=0\,\, \text{ for all} \,\,\varphi\in\mathbb{S}_b,
  \end{array}
 \end{equation}
 where  $\mathbf{D}J_\tau(u_\tau)$ is the Fr\'{e}chet derivative of  $J_\tau$ at the point $u_\tau\in\mathbb{S}^{0}_b$.

To prove (\ref{fr_DJ}), we first observe that, by $\zeta\in\mathbb{S}^{0}_b$,
\begin{equation*}
-\int_{0}^{1}\!\!\!\int_{\partial\Omega} u_\tau  v_\tau \left[A\nabla\log\zeta\right]\cdot\mathbf{n}=\frac{b}{1-b}\int_{0}^{1}\!\!\!\int_{\partial\Omega} u_\tau  v_\tau,
\end{equation*}
which is independent of $\zeta$ in the current case. 
Making use of this fact and  (\ref{fr9}), for any $\varphi\in\mathbb{S}_b$, we have
\begin{equation}\label{fr10}
\begin{split}
\mathbf{D}J_\tau(\zeta)\varphi=&\tau\int_{\mathcal{C}}  u_\tau  v_\tau\partial_t\left(\frac{\varphi}{\zeta}\right)-2\int_{\mathcal{C}} u_\tau v_\tau (\nabla\log\zeta)\cdot\left[ A\nabla\left(\frac{\varphi}{\zeta}\right)\right]\\
&+\int_{\mathcal{C}}\Big[A\nabla( u_\tau v_\tau)\Big]\cdot\nabla\left(\frac{\varphi}{\zeta}\right).
\end{split}
\end{equation}
Through straightforward calculations, we further have
 \begin{equation*}
\begin{split}
&\mathbf{D}J_\tau( u_\tau)\varphi\\
=&\tau\int_{\mathcal{C}}  u_\tau  v_\tau\partial_t\left(\frac{\varphi}{ u_\tau}\right)-2\int_{\mathcal{C}} u_\tau  v_\tau (\nabla\log u_\tau)\cdot\left[ A\nabla\left(\frac{\varphi}{ u_\tau}\right)\right]+\int_{\mathcal{C}}\Big[A\nabla( u_\tau  v_\tau)\Big]\cdot\nabla\left(\frac{\varphi}{ u_\tau}\right)\\
=&-\tau\int_{\mathcal{C}} \left(\frac{\varphi}{ u_\tau}\right)\partial_t( u_\tau  v_\tau)-2\int_{0}^{1}\!\!\!\int_{\partial\Omega} \left(\frac{\varphi v_\tau}{ u_\tau}\right)A\nabla u_\tau\cdot\mathbf{n}+2\int_{\mathcal{C}}\left(\frac{\varphi}{ u_\tau}\right)\nabla\cdot\Big[ v_\tau A \nabla  u_\tau\Big]\\
&+\int_{0}^{1}\!\!\!\int_{\partial\Omega}\left(\frac{\varphi}{ u_\tau}\right)A\nabla( u_\tau v_\tau)\cdot\mathbf{n}-\int_{\mathcal{C}}\left(\frac{\varphi}{ u_\tau}\right)\nabla\cdot\Big[A\nabla( u_\tau  v_\tau)\Big]\\
=&-\tau\int_{\mathcal{C}} \left(\frac{\varphi}{ u_\tau}\right)\partial_t( u_\tau  v_\tau)+2\int_{\mathcal{C}}\left(\frac{\varphi}{ u_\tau}\right) \Big\{\nabla v_\tau\cdot\left[A \nabla  u_\tau\right]+ v_\tau\mathrm{div}\left[A \nabla  u_\tau\right]\Big\}\\
&-\int_{\mathcal{C}}\left(\frac{\varphi}{ u_\tau}\right) \Big\{ v_\tau\mathrm{div}\left[A\nabla  u_\tau\right]+2\nabla  v_\tau\cdot \left[A\nabla  u_\tau\right]+ u_\tau\mathrm{div}\left[A\nabla  v_\tau\right]\Big\}\\
=&-\tau\int_{\mathcal{C}} \left(\frac{\varphi}{ u_\tau}\right)\partial_t( u_\tau  v_\tau)+\int_{\mathcal{C}}\left(\frac{\varphi v_\tau}{ u_\tau}\right)\mathrm{div}\left[A \nabla  u_\tau\right]-\int_{\mathcal{C}}\varphi\mathrm{div}\left[A\nabla  v_\tau\right]\\
=&-\int_{\mathcal{C}}\left(\frac{ v_\tau}{ u_\tau}\right)\Big\{\tau\partial_t u_{\tau}-\mathrm{div}\left[A \nabla u_\tau\right]\Big\}\varphi+\int_{\mathcal{C}}\Big\{ -\tau \partial_tv_{\tau}-\mathrm{div}\left[A\nabla  v_\tau\right]\Big\}\varphi.\\
\end{split}
\end{equation*}
In the above, the boundary integrals vanish due to the boundary conditions of $u_\tau $ and $v_\tau $.

By $ L_{\tau}u_\tau =\lambda(\tau)u_\tau $ and $ L^{*}_{\tau} v_\tau =\lambda(\tau)v_\tau $, we obtain
\begin{equation*}
\begin{split}
\mathbf{D}J_\tau(u_\tau )\varphi=&-\int_{\mathcal{C}}\left(\frac{v_\tau }{u_\tau }\right)\left[\lambda(\tau)u_\tau -Vu_\tau \right]\varphi +\int_{\mathcal{C}}\left[\lambda(\tau)v_\tau -Vv_\tau \right]\varphi=0,
\end{split}
\end{equation*}
and (\ref{fr_DJ}) thus follows.

We now proceed to prove formula (\ref{fr8}) through some  tedious manipulations. With the help of (\ref{fr9}), direct  calculation shows
\begin{equation*}
\begin{split}
&J_\tau( u_\tau)-J_\tau(\zeta)\\
=&-\tau\int_{\mathcal{C}}  u_\tau  v_\tau\partial_t\log \left(\frac{\zeta}{ u_\tau}\right)-\int_{\mathcal{C}} u_\tau  v_\tau(\nabla\log  u_\tau)\cdot\left[ A\nabla\log  u_\tau\right]+\int_{\mathcal{C}} u_\tau  v_\tau(\nabla\log \zeta)\cdot\left[ A\nabla\log \zeta\right] \\
&-\int_{\mathcal{C}}\nabla( u_\tau  v_\tau)\cdot\left[A\nabla\log \left(\frac{\zeta}{ u_\tau}\right)\right]\\
=&-\tau\int_{\mathcal{C}}  u_\tau  v_\tau\partial_t\log \left(\frac{\zeta}{ u_\tau}\right)+\int_{\mathcal{C}} u_\tau v_\tau \Big[\nabla\log\left(\zeta u_\tau\right)\Big]\cdot\left[A \nabla\log\left(\frac{\zeta}{ u_\tau}\right)\right] \\&-\int_{\mathcal{C}}\nabla( u_\tau  v_\tau)\cdot\left[A\nabla\log \left(\frac{\zeta}{ u_\tau}\right)\right]\\
=&-\tau\int_{\mathcal{C}}  u_\tau  v_\tau\partial_t\log \left(\frac{\zeta}{ u_\tau}\right)+\int_{\mathcal{C}} u_\tau v_\tau \left[\nabla\log\left(\frac{\zeta}{ u_\tau}\right)+2\nabla\log  u_\tau\right]\cdot\left[A \nabla\log\left(\frac{\zeta}{ u_\tau}\right)\right] \\
&-\int_{\mathcal{C}}\nabla( u_\tau  v_\tau)\cdot\left[A\nabla\log \left(\frac{\zeta}{ u_\tau}\right)\right]\\
=&-\tau\int_{\mathcal{C}}  u_\tau  v_\tau\partial_t\log \left(\frac{\zeta}{ u_\tau}\right)+2\int_{\mathcal{C}} u_\tau v_\tau (\nabla\log  u_\tau)\cdot\left[A \nabla\log\left(\frac{\zeta}{ u_\tau}\right)\right] \\
&-\int_{\mathcal{C}}\nabla( u_\tau  v_\tau)\cdot\left[A\nabla\log \left(\frac{\zeta}{ u_\tau}\right)\right]+\int_{\mathcal{C}} u_\tau v_\tau \left[\nabla\log\left(\frac{\zeta}{ u_\tau}\right)\right]\cdot\left[ A\nabla\log\left(\frac{\zeta}{ u_\tau}\right)\right].\\
\end{split}
\end{equation*}
As $u_\tau \log\left(\frac{\zeta}{u_\tau }\right)\in\mathbb{S}_b$, we choose $\varphi=u_\tau \log\left(\frac{\zeta}{u_\tau }\right)$ in (\ref{fr10}). By $\mathbf{D}J_\tau(u_\tau)\varphi=0$, we have
\begin{equation*}
\begin{split}
   J_\tau( u_\tau)-J_\tau(\zeta)&=-\mathbf{D}J_\tau( u_\tau)\varphi+\int_{\mathcal{C}} u_\tau v_\tau \left[\nabla\log\left(\frac{\zeta}{ u_\tau}\right)\right]\cdot\left[ A\nabla\log\left(\frac{\zeta}{ u_\tau}\right)\right] \\
   &=\int_{\mathcal{C}} u_\tau  v_\tau \left[\nabla\log\left(\frac{\zeta}{ u_\tau}\right)\right]\cdot\left[ A\nabla\log\left(\frac{\zeta}{ u_\tau}\right)\right],
   \end{split}
\end{equation*}
as desired.\\

\noindent {\bf Case 2: $b=1$.}
The Hopf Boundary Lemma implies that $\nabla u_\tau\cdot\mathbf{n}<0$ and $\nabla v_\tau\cdot\mathbf{n}<0$ on $\partial\Omega\times[0,1]$, and thus  $u_\tau,v_\tau\in \mathbb{S}^{0}_1$, so that  $J_\tau(u_\tau)$ and $J_\tau(v_\tau)$ are well defined.
  Fix $t\in[0,1]$. For any $\zeta\in\mathbb{S}^{0}_1$,  noting that $\nabla\zeta\cdot\mathbf{n}<0$ on $\partial\Omega$,  we have
  $$\lim_{x\rightarrow x_0}\frac{u_\tau v_\tau}{\zeta}=\lim_{x\rightarrow x_0}\frac{ v_\tau\nabla u_\tau\cdot\mathbf{n}+ u_\tau\nabla v_\tau\cdot\mathbf{n}}{\nabla\zeta\cdot\mathbf{n}}=0,\,\,\forall x_0\in\partial\Omega.$$
  Hence, it is easy to see that $\int_{\partial\Omega}u_\tau v_\tau\left[A\nabla\log\zeta\right]\cdot\mathbf{n}=0$ in  (\ref{fr9}).

Similarly as in Case 1, we can show that the principal eigenfunction $u_\tau$ is still a critical point of $J_\tau$,
 i.e., $\mathbf{D}J(u_\tau)\varphi=0$ for all $\varphi\in\mathbb{S}_1$. Based on this fact,  formula (\ref{fr8})
  can be proved by a similar argument as in Case 1. The  proof of Lemma \ref{Flem2.1} is thus complete.
\end{proof}
With the help of Lemma \ref{Flem2.1}, we are in a position to prove Theorem \ref{FR_thm 1}.\\

$\mathbf{Proof~ of ~Theorem ~\ref{FR_thm 1}}$:  We substitute
 $u=u_\tau$ into (\ref{fr21}) and differentiate the resulting equation with respect to $\tau$. Denoting $\frac{\partial u_\tau}{\partial\tau}=u'_\tau$ for  brevity, we  obtain
 \begin{equation*}
 \begin{cases}
\begin{array}{ll}
\smallskip
 \partial_t u_{\tau}+\tau \partial_t u'_{\tau}-\mathrm{div}\left[A\nabla u'_\tau\right]+V u'_\tau=\lambda(\tau) u'_\tau+\frac{\partial\lambda}{\partial\tau} u_\tau,&x\in\Omega,t\in[0,1],\\
 \smallskip
 bu'_\tau+(1-b)\left[A\nabla  u'_\tau\right]\cdot\mathbf{n}=0,&x\in\partial\Omega,t\in[0,1],\\
   u'_\tau(x,0)= u'_\tau(x,1),&x\in\Omega.
  \end{array}
 \end{cases}
 \end{equation*}
We multiply the above equation by $v_\tau$ and integrate the result over $\mathcal{C}$. Together with the facts of  $ L^{*}_{\tau} v_\tau =\lambda(\tau)v_\tau $ and the normalization $\int_{\mathcal{C}}u_\tau v_\tau=1$, we find that
\begin{equation*}
    \frac{\partial\lambda}{\partial\tau}=\int_{\mathcal{C}} v_\tau\partial_tu_{\tau}.
\end{equation*}
Recalling  the definitions of $ L_{\tau}$, $ L^{*}_{\tau}$ and $J_\tau$, we further derive
\begin{equation*}
\begin{split}
  \int_{\mathcal{C}} v_\tau\partial_t(u_{\tau})=&\frac{1}{2\tau}\int_{\mathcal{C}}v_\tau (L_{\tau}- L^{*}_{\tau}) u_\tau\\
   =&\frac{1}{2\tau}\left[\int_{\mathcal{C}}v_\tau  L_{\tau} u_\tau-\int_{\mathcal{C}}u_\tau L_{\tau} v_\tau\right]\\
    =&\frac{1}{2\tau}\Big[J_\tau(u_\tau)-J_\tau(v_\tau)\Big].
\end{split}
\end{equation*}
In view of $v_\tau\in\mathbb{S}^{0}_b$,  Lemma \ref{Flem2.1} implies
\begin{equation}\label{Liukey}
\begin{split}
   \frac{\partial\lambda}{\partial\tau}
    &=\frac{1}{2\tau}\int_{\mathcal{C}}u_\tau v_\tau \left[\nabla\log\left(\frac{v_\tau}{u_\tau}\right)\right]\cdot\left[ A\nabla\log\left(\frac{v_\tau}{u_\tau}\right)\right],
\end{split}
\end{equation}
which shows that $\frac{\partial\lambda}{\partial\tau}\geq0$ for all $\tau>0$.

It remains to prove Parts (i) and (ii) in Theorem \ref{FR_thm 1}. In what follows we assume that $A(x,t)=\hat{A}(x)$. When $V(x,t)=\hat{V}(x)+g(t)$ for some $1$-periodic function $g(t)$,
we set $U_\tau(x,t)=e^{\left(\frac{1}{\tau}\int_{0}^{1}g(s)\mathrm{d}s\right)}u_\tau$. Then  $U_\tau$ is $1$-periodic and solves
\begin{equation*}
 \begin{cases}
\begin{array}{ll}
\smallskip
\tau\partial_t U_{\tau}-\mathrm{div}[\hat{A}\nabla U_\tau]+\hat{V}(x)U_\tau=\lambda(\tau)U_\tau,&x\in\Omega,t\in[0,1],\\
\smallskip
 bU_\tau +(1-b)[\hat{A}\nabla U_\tau]\cdot\mathbf{n}=0,&x\in\partial\Omega,t\in[0,1],\\
  U_\tau(x,0)=U_\tau(x,1), &x\in\Omega.
  \end{array}
 \end{cases}
 \end{equation*}
Observe that $\hat{A}$ and $\hat{V}$ are independent of $t$. By the uniqueness of principal eigenfunction (up to multiplication by a constant), it is clear that $\lambda(\tau)$ is constant for $\tau>0$. This proves Part (i) in Theorem \ref{FR_thm 1}.

Finally,  we  show $\frac{\partial\lambda}{\partial\tau}>0$ for all $\tau>0$ if  $V$ does not take the form of $V=\hat{V}(x)+g(t)$. Suppose to the contrary that there exists some $\tau_0>0$ such that $\frac{\partial\lambda}{\partial\tau}(\tau_0)=0$. According to  (\ref{Liukey}), we have $u_{\tau_0}=c(t)v_{\tau_0}$ for some $1$-periodic function $c(t)>0$.  Substituting $u_{\tau_0}=c(t)v_{\tau_0}$ into  $L_{\tau_0}u_{\tau_0}=\lambda(\tau_0)u_{\tau_0}$ and using $ L^{*}_{\tau_0}v_{\tau_0}=\lambda(\tau_0)v_{\tau_0}$, we can deduce
$$ c'(t)v_{\tau_0}+2 c(t)\partial_tv_{\tau_0}=0.$$
It then follows $\partial_t\log v_{\tau_0}=-\frac{c'(t)}{2c(t)}$ in $\mathcal{C}$, which depends only on $t$. Hence, $v_{\tau_0}$ is of the form $v_{\tau_0}=X_{\tau_0}(x)T_{\tau_0}(t)$ with some  $1$-periodic function $T_{\tau_0}(t)>0$ in $[0,T]$ and function $X_{\tau_0}(x)>0$ in $\Omega$. Again using $ L^{*}_{\tau}v_{\tau_0}=\lambda(\tau_0)v_{\tau_0}$, we arrive at
$$-\tau_0\frac{T'_{\tau_0}(t)}{T_{\tau_0}(t)}-\frac{\mathrm{div}[\hat{A}X_{\tau_0}](x)}{X_{\tau_0}(x)}+V(x,t)=\lambda(\tau_0)\,\,\mathrm{in} \,\,\mathcal{C}.$$
Thus, it is necessary that  $V$ has the form of $V=\hat{V}(x)+g(t)$, contradicting the previous assumption. The proof of Theorem \ref{FR_thm 1} is now complete.
\qed

\subsection{The case $A(x,t)=DI_{N\times N}$ and $\nabla m(x,t)=\nabla\hat{m}(x)$}
We now prove Theorem \ref{FR_thm 1.1}.  Under our assumption, problem (\ref{fr1}) reduces to the following:
 \begin{equation}\label{fr23}
\begin{cases}
\begin{array}{ll}
\tau\partial_{t}u-D\Delta u-\nabla \hat{m}(x)\cdot\nabla u+V(x,t)u=\lambda(\tau)u, &x\in\Omega,t\in[0,1],\\
bu+(1-b)D\nabla u\cdot\mathbf{n}=0, &x\in\partial\Omega,t\in[0,1],\\
u(x,0)=u(x,1), &x\in\Omega.
\end{array}
\end{cases}
\end{equation}

$\mathbf{Proof~ of ~Theorem ~\ref{FR_thm 1.1}}$: The argument is similar to that of Theorem \ref{FR_thm 1} and hence we only give a sketch here.
 By the transformation $\varphi=e^{\frac{\hat{m}}{2D}}u$,  problem (\ref{fr23}) can be rewritten as
\begin{equation}\label{fr6}
 \begin{cases}
\begin{array}{ll}
\smallskip
\mathcal{L}_{\tau}\varphi:=\tau \partial_{t}\varphi-D\Delta\varphi+h(x,t)\varphi=\lambda(\tau)\varphi,&x\in\Omega,t\in[0,1],\\
\smallskip
  \left(b-\frac{1-b}{2}\nabla \hat{m}\cdot\mathbf{n}\right)\varphi+(1-b)D\nabla \varphi\cdot\mathbf{n}=0,&x\in\partial\Omega,t\in[0,1],\\
  \varphi(x,0)=\varphi(x,1),&x\in\Omega,
  \end{array}
 \end{cases}
 \end{equation}
 with $h=\frac{\Delta\hat{m}}{2}+\frac{|\nabla\hat{m}|^2}{4D}+V(x,t)$. Denote by $\widetilde{u}_\tau>0$ the eigenfunction of problem (\ref{fr23}) corresponding to $\lambda(\tau)$. Then $\varphi_\tau=e^{\frac{\hat{m}}{2D}}\widetilde{u}_\tau$ (normalized by $\int_{\mathcal{C}}\varphi^2_\tau=1$) solves (\ref{fr6}). We also consider the adjoint problem to (\ref{fr6}):
\begin{equation}\label{fr113}
 \begin{cases}
\begin{array}{ll}
\smallskip
\mathcal{L}^{*}_{\tau}\psi:=-\tau \partial_{t}\psi-D\Delta\psi+h(x,t)\psi=\lambda(\tau)\psi,&x\in\Omega,t\in[0,1],\\
\smallskip
  \left(b-\frac{1-b}{2}\nabla \hat{m}\cdot\mathbf{n}\right)\psi+(1-b)D\nabla \psi\cdot\mathbf{n}=0,&x\in\partial\Omega,t\in[0,1],\\
  \psi(x,0)=\psi(x,1),&x\in\Omega.
  \end{array}
 \end{cases}
 \end{equation}
 Choose $\psi_\tau>0$ to be the principal eigenfunction of \eqref{fr113},  normalized by $\int_{\mathcal{C}}\varphi_\tau \psi_\tau=1$. Similarly as in Subsection \ref{S3.1}, we  introduce the functional
 \begin{equation*}
   \widetilde{J}_\tau(\zeta)=\int_{\mathcal{C}}\varphi_\tau \psi_\tau\left(\frac{\mathcal{ L}_{\tau}\zeta}{\zeta}\right)\mathrm{d}x\mathrm{d}t,
 \end{equation*}
 which is well defined on the cone
\begin{equation*}
\widetilde{\mathbb{S}}^{0}_b=
 \begin{cases}
 \begin{split}
 &\{\zeta \in \widetilde{\mathbb{S}}_b:  \zeta>0 ~\mathrm{in}~\overline{\mathcal{C}}\}, &0\leq b<1,\\
 &\{\zeta \in \widetilde{\mathbb{S}}_1:  \zeta>0 ~\mathrm{in}~\mathcal{C}, \nabla\zeta\cdot\mathbf{n}<0~\mathrm{on} ~\partial\Omega \times[0,1]\},&b=1.
  \end{split}
 \end{cases}
 \end{equation*}
Here $\widetilde{\mathbb{S}}_b$ is defined by
 \begin{equation*}
\widetilde{\mathbb{S}}_b=\left\{
\begin{array}{ll}
\medskip
\zeta \in  C^{2,1}(\mathcal{C})\cap C^{1,1}(\overline{\mathcal{C}}): &\zeta(x,0)=\zeta(x,T)~\mathrm{in}~\Omega, \\
 &\left(b-\frac{1-b}{2}\nabla \hat{m}\cdot\mathbf{n}\right)\zeta+(1-b)D\nabla \zeta\cdot\mathbf{n}=0 ~\mathrm{on} ~\partial\Omega \times[0,1]
\end{array}\right\}.
 \end{equation*}
Proceeding similarly  as in the proof of  Lemma \ref{Flem2.1}, one can check that $\widetilde{J}_\tau$ satisfies the property:
\begin{equation*}
   \widetilde{J}_\tau(\varphi_\tau)-\widetilde{J}_\tau(\zeta)=D\int_{\mathcal{C}}\varphi_\tau \psi_\tau \left|\nabla\log\left(\frac{\zeta}{\varphi_\tau}\right)\right|^2,\,\, \forall\zeta\in\widetilde{\mathbb{S}}^{0}_b.
\end{equation*}
Then the same analysis as in Theorem \ref{FR_thm 1} enables one to conclude  Theorem \ref{FR_thm 1.1}.
\qed

\section{Non-monotonicity of $\lambda(\tau)$ in the general case}\label{S3}
In this section, we will construct two examples to show  that the monotonicity of $\lambda(\tau)$ stated in Theorems \ref{FR_thm 1} and \ref{FR_thm 1.1}  may not hold for a general $m(x,t)$.
\begin{example}[A counterexample for Theorem \ref{FR_thm 1} when $\nabla m\neq0$]\label{Ex1}
Let $m(x,t)=x_1$, $A(x,t)=a(t)$, $V(x,t)=\frac{-x_1a'(t)}{2a^2(t)}$, and  $b=0$ in problem (\ref{fr1}). Here $a(t)\in C^1([0,1])$ is some $1$-periodic positive function satisfying $a'(t)\not\equiv0$. Now, consider the eigenvalue problem
\begin{equation}\label{fr15}
\begin{cases}
\begin{array}{ll}
\smallskip
\tau\partial_{t}u-a(t)\Delta u-\partial_{x_1}u+\frac{-x_1a'(t)}{2a^2(t)}u=\lambda(\tau)u,&x\in\Omega, t\in[0,1],\\
\smallskip
\nabla u\cdot\mathbf{n}=0,&x\in\partial\Omega,t\in[0,1],\\
u(x,0)=u(x,1), &x\in\Omega.
\end{array}
\end{cases}
\end{equation}
Since  $\hat{V}(x)=0$, Part (ii) in Theorem \ref{FR_thm 2} concludes that
$$\lambda(\infty)=\lim_{\tau\rightarrow\infty}\lambda(\tau)=0.$$
Let $u_\tau>0$ be the principal eigenfunction of (\ref{fr15}) and introduce $\varphi_\tau=e^{\frac{x_1}{2a(t)}}u_\tau$. By direct calculation, we have
\begin{equation*}
 \begin{cases}
\begin{array}{ll}
\smallskip
\tau \partial_t\varphi_{\tau}-a(t)\Delta \varphi_{\tau}+\left[\frac{\tau x_1a'(t)}{2a^2(t)}+\frac{1}{4a(t)}-\frac{x_1a'(t)}{2a^2(t)}\right]\varphi_\tau=\lambda(\tau)\varphi_\tau,&x\in\Omega, t\in[0,1],\\
\smallskip
  2a(t)\nabla \varphi_{\tau}\cdot\mathbf{n}-\varphi_{\tau}\mathbf{n}_{x_1}=0,&~~x\in\partial\Omega, t\in[0,1],\\
  \varphi_\tau(x,0)=\varphi_\tau(x,1),&x\in\Omega.
  \end{array}
 \end{cases}
 \end{equation*}
Taking $\tau=1$, we obtain
\begin{equation*}
 \begin{cases}
\begin{array}{ll}
\smallskip
\partial_t\varphi_{1}-a(t)\Delta \varphi_{1}+\frac{1}{4a(t)}\varphi_1=\lambda(1)\varphi_1,&x\in\Omega, t\in[0,1],\\
\smallskip
   2a(t)\nabla \varphi_{1}\cdot\mathbf{n}-\varphi_{1}\mathbf{n}_{x_1}=0,&x\in\partial\Omega, t\in[0,1],\\
   \varphi_1(x,0)=\varphi_1(x,1),&x\in\Omega.
  \end{array}
 \end{cases}
 \end{equation*}
We multiply the above equation by $\varphi_1$ and integrate the resulting equation over $\mathcal{C}=\Omega\times (0,1)$. Then it follows from the boundary condition of $\varphi_1$ that
\begin{equation*}
\begin{split}
  \lambda(1)\int_{\mathcal{C}}\varphi_{1}^{2}&=-\frac{1}{2}\int_{0}^{1}\!\!\!\int_{\partial\Omega}\varphi^2_{1}\,\mathbf{n}_{x_1} +\int_{\mathcal{C}}a(t)|\nabla \varphi_{1}|^{2}+\int_{\mathcal{C}}\frac{\varphi_{1}^{2}}{4a(t)}\\
   &=-\int_{\mathcal{C}}\varphi_1\partial_{x_1}\varphi_{1} +\int_{\mathcal{C}}a(t)|\nabla \varphi_{1}|^{2}+\int_{\mathcal{C}}\frac{\varphi_{1}^{2}}{4a(t)}\\
   &=\int_{\mathcal{C}}\bigg[\frac{\varphi_1}{2\sqrt{a(t)}}-\sqrt{a(t)}\partial_{x_1}\varphi_{1}\bigg]^2 +\int_{\mathcal{C}}a(t)\sum_{i=2}^{N}(\partial_{x_i} \varphi_{1})^{2}\\
   &\geq 0.
\end{split}
\end{equation*}
If $\lambda(1)=0$, it is easily seen that $\partial_{x_1}\varphi_{1}=\frac{\varphi_1}{2a(t)}$ and $\partial_{x_i} \varphi_{1}=0$ for all $2\leq i\leq N$, whence $\varphi_{1}=Ce^{\frac{x_1}{2a(t)}}$ with some constant $C>0$. In view of $\varphi_1=e^{\frac{x_1}{2a(t)}}u_1$, we arrive at $u_1=C$. Substituting $u_1=C$ in problem $(\ref{fr15})$ gives a contradiction to $a'(t)\not\equiv0$. Thus  $\lambda(1)>0=\lambda(\infty)$ and in turn $\lambda(\tau)$ is not non-decreasing in $\tau$.
\end{example}

\begin{example}[A counterexample for Theorem \ref{FR_thm 1.1} when $A=I_{N\times N}$ but $\nabla m(x,t)\neq\nabla \hat{m}(x)$]\label{Ex2}
Let $\Omega=(0,2\pi)$,  $m(x,t)=\cos x\sin t$, $D=1$, $V(x,t)=\frac{1}{2}\cos x(\sin t+\cos t)$, and $b=0$ in problem (\ref{fr23}). Thus we are led to the following problem:
\begin{equation}\label{fr14}
 \begin{cases}
\begin{array}{ll}
\smallskip
\tau\partial_{t}u-\partial_{xx}u+\sin x\sin t\partial_xu+\frac{1}{2}\cos x(\sin t+\cos t)u=\lambda(\tau)u,&x\in(0,2\pi),t\in[0,2\pi],\\
\partial_{x}u(0,t)=\partial_{x}u(2\pi,t)=0,&t\in[0,2\pi],\\
u(x,0)=u(x,2\pi),&x\in(0,2\pi).
  \end{array}
 \end{cases}
 \end{equation}
Clearly, $m(x,t)$ does not satisfy the assumption in  Theorem \ref{FR_thm 1.1}. Let $\lambda(\tau)$ be the principal eigenvalue of (\ref{fr14}) and $u_\tau>0$ be the corresponding eigenfunction. By Part (ii) in Theorem \ref{FR_thm 2}, we  infer that
$$\lambda(\infty)=\lim_{\tau\rightarrow\infty}\lambda(\tau)=0.$$
Set $\varphi_\tau=e^{\frac{1}{2}\cos x\sin t}u_\tau$ and $\tau=1$. Then $(\lambda(1), \varphi_1)$ solves
  \begin{equation*}
 \begin{cases}
\begin{array}{ll}
\smallskip
\partial_t\varphi_{1}-\partial_{xx}\varphi_{1}+\frac{\sin^2x\sin^2t}{4}\varphi_1=\lambda(1)\varphi_1,&x\in(0,2\pi),t\in[0,2\pi],\\
 \smallskip
\partial_{x}\varphi_{1}(0,t)= \partial_{x}\varphi_{1}(2\pi,t)=0,&t\in[0,2\pi],\\
\varphi_1(x,0)=\varphi_1(x,2\pi),&x\in(0,2\pi).
  \end{array}
 \end{cases}
 \end{equation*}
Multiplying the above equation by $\varphi_1$ and integrating the resulting equation over $\mathcal{C}=(0,2\pi)\times (0,2\pi)$, we arrive at
$$\lambda(1)\int_{\mathcal{C}}\varphi_{1}^{2}=\int_{\mathcal{C}}\left(\partial_x\varphi_{1}\right)^{2}+\int_{\mathcal{C}}\left[\frac{\sin^2x\sin^2t}{4}\right]\varphi_{1}^{2},$$
which implies immediately that $\lambda(1)>0=\lambda(\infty)$. Hence $\lambda(\tau)$ is not non-decreasing in $\tau$.\\
\end{example}

\section{ Asymptotic behaviors of $\lambda(\tau)$}\label{S4}
The goal of this section is to prove Theorem \ref{FR_thm 2}.\\

$\mathbf{Proof~of~Theorem~ \ref{FR_thm 2}}$. As already noted, the proof of Part (ii) in Theorem \ref{FR_thm 2} has been carried out by \cite{Nadin2009} for the space-time periodic case, which can be easily adapted to the present setting. We thus omit the details here and  refer the interested reader to Lemma 3.10 in \cite{Nadin2009}.

To prove Part (i) in Theorem \ref{FR_thm 2}, denote by $u_0(x,t)$  the principal eigenfunction of (\ref{fr2}) corresponding to $\lambda^0(t)$ for fixed $t$. For any $x\in\overline{\Omega}$, it is easy to see that  $u_0(x,\cdot), \nabla u_0(x,\cdot)\in C^{1}([0,1])$ and $u_0(x,t+1)=u_0(x,t)$.

Define $\underline{u}=\rho(t)u_0$ for some $1$-periodic function $\rho(t)>0$. Given arbitrary $\epsilon>0$, it is desirable to specify $\rho(t)$ such that 
\begin{equation}\label{fr4}
\left\{
\begin{array}{ll}
\smallskip
 \tau \partial_{t}\underline{u}-\mathrm{div}\left[A\nabla \underline{u}\right]-\nabla m\cdot\nabla\underline{u}+V\underline{u}\leq\left(\int_{0}^{1}\lambda^0(s)\mathrm{d}s+\epsilon\right)\underline{u},&x\in\Omega,t\in[0,1],\\
\smallskip
b\underline{u}+(1-b)\left[A\nabla \underline{u}\right] \cdot\mathbf{n}=0,&x\in\partial\Omega,t\in[0,1],\\
\smallskip
\underline{u}(x,0)=\underline{u}(x,1),&x\in\Omega,
\end{array}
\right.
\end{equation}
provided that  $\tau>0$  is sufficiently small.  Then a direct application of Proposition 2.1 of \cite{PZ2015} gives  
\begin{equation}\label{fr_sup}
 \limsup_{\tau\rightarrow0}\lambda(\tau)\leq \int_{0}^{1}\lambda^0(s)\mathrm{d}s+\epsilon.
\end{equation}


To verify (\ref{fr4}), substitute $\underline{u}=\rho(t)u_0$ into (\ref{fr4}). Direct calculation shows that $\rho(t)$ should satisfy
\begin{equation*}
\begin{array}{l}
\tau\rho'u_0+\tau\rho\partial_tu_0+\lambda^0(t)\rho u_0\leq\left(\int_{0}^{1}\lambda^0(s)\mathrm{d}s+\epsilon\right)\rho u_0.
\end{array}
\end{equation*}

With this in mind, we now claim that $\frac{\partial_tu_0}{u_0}$ is bounded in $\overline{\Omega}\times[0,1]$. Indeed, the claim is obvious for $0\leq b<1$  because of $u_0>0$ in $\overline{\mathcal{C}}$. For $b=1$ (i.e., zero Dirichlet boundary condition), it suffices to prove $\lim\limits_{x\rightarrow\partial\Omega}\frac{\partial_tu_0}{u_0}$ is bounded. The well-known Hopf Boundary Lemma for parabolic equations (see e.g. Proposition 13.3 of \cite{Hess})  implies that  $\nabla u_0\cdot\mathbf{n}<0$ on $\partial\Omega\times[0,1]$.
Fix any $ x_0\in\partial\Omega$. Thanks to $u_0(x_0,t)=\partial_tu_0( x_0,t)=0$, the claim follows from
\begin{equation*}
\begin{array}{l}
\lim\limits_{x\rightarrow x_0}\frac{\partial_tu_0}{u_0}(x,t)=\frac{\nabla\partial_tu_0\cdot\mathbf{n}}{\nabla u_0\cdot\mathbf{n}}(x_0,t).
\end{array}
\end{equation*}

Consequently, we can choose $\tau$ so small that $\tau|\partial_tu_0|\leq\epsilon u_0$. Then define $\rho(t)$ by
\begin{equation*}
\begin{array}{l}
\tau\rho'=\left(\int_{0}^{1}\lambda^0(s)\mathrm{d}s-\lambda^0(t)\right)\rho,
\end{array}
\end{equation*}
i.e.,
$\rho(t)=\exp\left[\frac{1}{\tau}\left(t\int_{0}^{1}\lambda^0(s)\mathrm{d}s-\int_{0}^{t}\lambda^0(s)\mathrm{d}s\right)\right]$. We can check readily  that $\rho(t)=\rho(t+1)$ and  $\underline{u}$ verifies (\ref{fr4}), whence (\ref{fr_sup}) holds.

In a similar manner, we may construct $\overline{u}$ such that
\begin{equation*}
\left\{
\begin{array}{ll}
\smallskip
 \tau \partial_{t}\overline{u}-\mathrm{div}\left[A\nabla \overline{u}\right]-\nabla m\cdot\nabla\overline{u}+V\overline{u}\geq\left(\int_{0}^{1}\lambda^0(s)\mathrm{d}s-\epsilon\right)\overline{u},&x\in\Omega,t\in[0,1],\\
\smallskip
b\overline{u}+(1-b)\left[A\nabla \overline{u}\right] \cdot\mathbf{n}=0,&x\in\partial\Omega,t\in[0,1],\\
\smallskip
\overline{u}(x,0)=\overline{u}(x,1),&x\in\Omega,
\end{array}
\right.
\end{equation*}
 for sufficiently small $\tau>0$, which combined with Proposition 2.1 of \cite{PZ2015} leads to
 \begin{equation}\label{fr_inf}
\liminf_{\tau\rightarrow0}\lambda(\tau)\geq\int_{0}^{1}\lambda^0(s)\mathrm{d}s-\epsilon.
\end{equation}

 Part (i) in Theorem \ref{FR_thm 2} follows from  (\ref{fr_sup}) and (\ref{fr_inf}).
\qed

\bigskip
\noindent{\bf Acknowledgments.} SL was partially supported by the NSFC grant No. 11571364.
YL  was partially supported by the NSF grant DMS-1411176. RP was partially supported by the National
Science Foundation (NSF) of China (grant nos. 11671175 and 11571200), the
Priority Academic Program Development of Jiangsu Higher Education Institutions,
the Top-notch Academic Programs Project of Jiangsu Higher Education
Institutions (grant no. PPZY2015A013), and the Qing Lan Project of Jiangsu
Province. MZ was partially supported by the Australian Research
Council (grant no. DE170101410).



\begin{thebibliography}{00}

\bibitem{Berestycki2005}  Berestycki, H., Hamel, F.,    Nadirashvili, N.: Elliptic eigenvalue problems with large drift and applications to nonlinear propagation phenomena,  Commun. Math. Phys., 253 (2005) 451-480.
\bibitem{CC2003}
Cantrell, R.S.,   Cosner, C.: 
Spatial Ecology via Reaction-Diffusion Equations. Series in
Mathematical and Computational Biology, John Wiley and Sons, Chichester, UK, 2003.
\bibitem{CL2008}  Chen, X.F., Lou, Y.: Principal eigenvalue and eigenfunctions of an elliptic operator with large advection and its application to a competition model, Indiana Univ. Math. J., 57   (2008) 627-658.

\bibitem{CL2012}  Chen, X.F., Lou, Y.:   Effects of diffusion and advection on the smallesteigenvalue of an elliptic operators and their applications, Indiana Univ. Math J., 60 (2012) 45-80.


\bibitem{Hess}
Hess, P.: Periodic-parabolic Boundary Value Problems and
Positivity, Pitman Res., Notes in Mathematics  247, Longman
Sci. Tech., Harlow, 1991.

\bibitem{Hutson2000} Hutson, V., Shen, W., Vickers, G.T.: Estimates for the principal spectrum point for certain time-dependent
parabolic operators, Proc. A.M.S., 129 (2000) 1669-1679.

\bibitem{Hutson2001}Hutson, V., Michaikow, K., Pol\'{a}\v{c}ik, P.: The evolution of dispersal rates in a heterogeneous time-periodic
environment, J. Math. Biol., 43 (2001) 501-533.


\bibitem{LL2018} Liu, S., Lou, Y.: A functional approach towards eigenvalue problems associated with incompressible flow, Submitted  (2018).



\bibitem{Nadin2009} Nadin, G.: The principal eigenvalue of a space-time periodic parabolic operator, Ann. Mat. Pur. Appl., 188 (2009) 269-295.

\bibitem{Ni2011}Ni, W.M.:  The Mathematics of Diffusion, CBMS-NSF Regional Conf. Ser. in Appl. Math. 82, SIAM,
Philadelphia, 2011.


\bibitem{PZZ2018}Peng, R., Zhang, G., Zhou, M.: Asymptotic behavior of the principal eigenvalue of a linear second order elliptic operator with small/large diffusion coefficient and its application, preprint (arXiv:1809.10815).

\bibitem{PZ2015}Peng, R., Zhao, X.Q.: Effects of diffusion and advection on the principal eigenvalue of a periodic-parabolic problem with applications, Calc. Var. Partial Diff., 54 (2015) 1611-1642.

\bibitem{PZ2017}Peng, R., Zhou, M.: Effects of large degenerate advection and boundary conditions on the principal eigenvalue and its eigenfunction of a linear second order elliptic operator, Indiana Univ. Math J.,  67 (2018) 2523-2568.



\end{thebibliography}
\end{document}